\newtheorem{prop}{Proposition}
\newtheorem{theo}[prop]{Theorem}
\newtheorem{cor}[prop]{Corollary}
\newtheorem{conj}[prop]{Conjecture}
\theoremstyle{remark}
\begin{document}

\title[amenability and factorization property]{Characterization of amenability by a factorization property of the group von Neumann algebra}

\author{Denis Poulin}
\address{Department of Mathematics, Carleton University, Ottawa, Ontario, Canada}
\curraddr{Department of Mathematics, 1125 Colonel By Drive, Ottawa, Ontario, K1S 5B6, Canada}
\email{dpoulin@.connect.carleton.ca}
 
\thanks{The author was supported by Carleton University.}

\subjclass{43A07, 46H05}

\date{June 16, 2011 }


\keywords{Amenability, Locally compact groups, Fourier algebras, Banach algebras, Segal algebras }

\begin{abstract}
 We show that the amenability of a locally compact group $G$ is equivalent to a factorization property of $VN(G)$ which is given by $ VN(G) = \langle VN(G)^*VN(G)\rangle$. This answer partially  two problems proposed by Z. Hu and M. Neufang in their article \textit{Distinguishing properties of Arens irregularity}, Proc. Amer. Math. Soc. 137 (2009), no. 5, 1753--1761
\end{abstract}

\maketitle

		It is known that the amenability of a locally compact group $G$ can be characterized in many different ways. For example $ A(G)$ has a bounded approximate 
		identity (\cite{Lep2}; \cite[Theorem 6, p. 129]{He1}), $A(G)$ factorizes weakly, i.e. $A(G)$ is the linear span of $A(G) \cdot A(G)$ \cite[Proposition 2, p. 
		138]{Lo1}, or $A(G)$ is closed in $M(A(G), A(G))$, its multiplier algebra \cite[Theorem 1]{Lo3}. It is also known that $A(G)$ is unital if and 
		only if $G$ is compact which is equivalent to $VN(G) = UCB(\hat{G})$. This last equality can be formulated for a Banach algebra $A$ by $A^* = A^*A$. 
		It seems possible to believe that if we relax the factorization property of the dual, we could characterize the existence of a BAI 
		for $A(G)$, which is equivalent to the amenability of $G$ by Leptin's theorem. More precisely, in this note, we prove that $G$ is amenable if 
		and only if the linear span $ VN(G)^*VN(G)$, denoted by $\langle VN(G)^*VN(G)\rangle$, is equal to $VN(G)$. With this characterization of amenability for 
		locally compact groups, we give a partial answer to the following problems proposed by Z. Hu and M. Neufang in \cite{HN1}. \\
			~~\\
			\textbf{Problem 1} : Does $\overline{\langle VN(G)^*\square VN(G)\rangle}=VN(G)$ imply that $G$ is amenable?\\
			~~\\ 
		In this article, we made a stronger assumption than only the norm density of $\langle VN(G)^*\square VN(G)\rangle$ in $VN(G)$. However, with our assumption, 
		we obtain a positive answer to this problem. \\ \indent Another problem proposed in the same 
		article of Z. Hu and M. Neufang is similar to the previous one. We stated here.\\
			~~\\
			\textbf{Problem 2} : If $VN(G)$ factors over $B_{\rho}(G)$, i.e., $\overline{\langle B_{\rho}(G)\square VN(G)\rangle} = VN(G)$, is $G$ amenable?\\
		~~\\
		By \cite[Corollary 3.1]{H1}, $\overline{\langle B_{\rho}(G)\square VN(G)\rangle} = \overline{\langle VN(G)^*\square VN(G)\rangle}$ if $G$ is discrete. Thus, with Theorem 
		\ref{theo_amenable}, if $G$ is discrete such that $ \langle B_{\rho}(G)VN(G)\rangle = \langle VN(G)^*\square VN(G)\rangle$ and closed, then $\langle B_{\rho}(G)\square VN(G)\rangle = 
		VN(G)$ implies that $G$ is amenable.\\ \indent
		Our argument will use the theory of abstract Segal algebras. The reader is referred to \cite{Bu2}, \cite{Bu3}, \cite{Le2} and \cite{Le3} to know more about 
		this subject. For the definition and properties of the Fourier algebra $A(G)$ and its dual $ V(G)$, the group von Neumann algebra,  we refer to \cite{Ey1}. 
		Also, to fix our notation, we recall the definitions of the Arens products. Let $A$ be a Banach algebra. For $ m, n \in A^{**}$, $f \in A^*$, $a,b \in A$, 
												\begin{eqnarray*}
																\langle m \square n , f \rangle &=& \langle m , n\square f \rangle,
												\end{eqnarray*}
		where $	\langle n\square f ,a \rangle 	= \langle n , f \square a \rangle$ and $ \langle f \square a, b \rangle 	= \langle f , ab \rangle$. Similarly, using 
		the right $A$-module structure of $A$ on $A^*$, we have that
												\begin{eqnarray*}
															 \langle m \triangle n , f \rangle &=& \langle n , f\triangle m \rangle, 
												\end{eqnarray*}
		where $\langle f \triangle m , a \rangle = \langle m, a\triangle f \rangle$ and $\langle a \triangle f , b \rangle = \langle f , ba \rangle$. For a Banach 
		algebra $A$ and a $A$-bimodule $X$, we denote by $\langle XA\rangle $ and $\langle AX\rangle$ the linear span of $XA$ and $AX$ respectively.\\
		 ~~\\
		We present the main tool of this note which is an improvement of \cite[Theorem 3.3.3]{Po}. 
		\begin{theo}\label{theo_segal}
					Let $A$ be a faithful Banach algebra. Let $B$ be a proper right (left) abstract Segal algebra in $A$. Then $B^* \neq
					\langle B^* \triangle B^{**}\rangle$ ($ B^* \neq \langle B^{**} \square B^*\rangle$ ).
		\end{theo}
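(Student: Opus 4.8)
The plan is to establish the \emph{stronger} inclusion $\langle B^{*}\triangle B^{**}\rangle\subseteq j^{*}(A^{*})$, where $j\colon B\hookrightarrow A$ is the canonical embedding (continuous by the Segal estimate $\|\cdot\|_{A}\le M\|\cdot\|_{B}$, with dense range). Since $B$ is \emph{proper}, $j$ is not onto, hence not bounded below, so by the open mapping theorem the adjoint $j^{*}\colon A^{*}\to B^{*}$ is injective but not surjective; thus $j^{*}(A^{*})$ is a proper linear subspace of $B^{*}$, and the displayed inclusion immediately gives $B^{*}\neq\langle B^{*}\triangle B^{**}\rangle$. (If the formulation of abstract Segal algebra in use is via a dense-range homomorphism rather than an inclusion, the faithfulness of $A$ together with the Segal inequality forces that homomorphism to be injective, so there is no loss in treating $B$ as a dense right ideal of $A$; this is the role of the faithfulness hypothesis.)

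The heart of the argument is a single computation. Fix $f\in B^{*}$ and consider the map $\Psi_{f}\colon B\to B^{*}$, $b\mapsto b\triangle f$, where $\langle b\triangle f,c\rangle=\langle f,cb\rangle$ for $c\in B$. The key point is that $\Psi_{f}$ is continuous for the $A$-norm: for $b,c\in B$ one has $cb\in B$ (because $B$ is a right ideal of $A$) and, by the defining Segal inequality,
\[
 |\langle b\triangle f,c\rangle|=|\langle f,cb\rangle|\le \|f\|_{B^{*}}\,\|cb\|_{B}\le C\,\|f\|_{B^{*}}\,\|c\|_{B}\,\|b\|_{A},
\]
so $\|\Psi_{f}(b)\|_{B^{*}}\le C\|f\|_{B^{*}}\|b\|_{A}$. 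By density of $B$ in $A$, $\Psi_{f}$ extends uniquely to a bounded operator $A\to B^{*}$, still denoted $\Psi_{f}$, and we may form its adjoint $\Psi_{f}^{*}\colon B^{**}=(B^{*})^{*}\to A^{*}$. Then for every $m\in B^{**}$ and every $b\in B$,
\[
 \langle j^{*}(\Psi_{f}^{*}m),b\rangle=\langle \Psi_{f}^{*}m,b\rangle_{A^{*},A}=\langle m,\Psi_{f}(b)\rangle=\langle m,b\triangle f\rangle=\langle f\triangle m,b\rangle ,
\]
the last equality being the very definition of $f\triangle m$. Hence $f\triangle m=j^{*}(\Psi_{f}^{*}m)\in j^{*}(A^{*})$; since $f$ and $m$ were arbitrary and $j^{*}(A^{*})$ is a subspace, $\langle B^{*}\triangle B^{**}\rangle\subseteq j^{*}(A^{*})\subsetneq B^{*}$.

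For the left-ideal version the argument is symmetric: when $B$ is a dense left ideal of $A$ one replaces $\Psi_{f}$ by the map $\Lambda_{f}\colon A\to B^{*}$, $a\mapsto f\square a$ (with $\langle f\square a,b\rangle=\langle f,ab\rangle$), which is $A$-norm continuous by the left-hand Segal inequality, and the same computation yields $n\square f=j^{*}(\Lambda_{f}^{*}n)$ for all $n\in B^{**}$, whence $\langle B^{**}\square B^{*}\rangle\subseteq j^{*}(A^{*})\subsetneq B^{*}$. I do not expect a genuine obstacle here; the two points that require care are (i) pairing the module action occurring inside $b\triangle f$ (resp. $f\square a$) with the \emph{correct} side of the Segal inequality — this is exactly what makes $\Psi_{f}$ (resp. $\Lambda_{f}$) continuous for the ambient norm — and (ii) the single non-formal input, that properness of $B$ is precisely the condition guaranteeing, via the open mapping theorem, that $j^{*}(A^{*})$ is a proper subspace of $B^{*}$.
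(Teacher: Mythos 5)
Your proof is correct and rests on the same key point as the paper's: the right Segal inequality makes $b\mapsto b\triangle f$ bounded for the $A$-norm on $B$, so every $f\triangle m$ lies in $\iota^*(A^*)$, while properness plus density prevents $\iota^*(A^*)$ from being all of $B^*$. The differences are only organizational: you argue directly, via the adjoint of the extended map $\Psi_f$ and the duality ``$\iota^*$ onto $\Rightarrow$ $\iota$ bounded below'', whereas the paper argues by contradiction, approximating $m\in B^{**}$ by a bounded net from $B$ (Goldstine) and deducing the impossible equivalence of the $A$- and $B$-norms on $B$.
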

		
		\begin{proof}
					We will proceed by contradiction. Let $ B^* = \langle B^* \triangle B^{**}\rangle$. We 
					first prove that each element of $ \langle B^* \triangle B^{**}\rangle$ as a unique extension on $A$. From there, we will conclude that the norm of $A$ and $B$ are 
					equivalent on $B$ which is a contradiction. Let $ m_i \in B^{**}$, $ i=1, \ldots, n$. By Goldstein's theorem, for each $i$, there exists a bounded net $ 
					m_{i,\alpha}$ in $B$ such that $ m_{i,\alpha}\stackrel{w^*}{\rightarrow} m_i$ in $B^{**}$. Let $ f_i\in B^*$, and $ b\in B$. Observe that
								\begin{eqnarray*}
											\left| \langle \sum_{i=1}^n f_i \triangle m_i, b \rangle \right| 	&=& \left| \sum_{i=1}^n \langle m_i\triangle b , f_i \rangle \right| \\
																														&=& \left| \sum_{i=1}^n \langle m_i \square b , f_i \rangle \right| \\
																														&=& \left| \sum_{i=1}^n \lim_{\alpha}\langle m_{i,\alpha} b, f_i \rangle \right| \\
																														&\leq & \sum_{i=1}^{n} C_i \sup_{\alpha} \|m_{i,\alpha}\|_{B} ~\|b\|_{A} ~\|f_i\|_{B^*}\\
																														&\leq & M \|b\|_A
								\end{eqnarray*}
		This shows that $ \sum_{i=1}^n f_i \triangle m_i$ is bounded on $B$ with the norm of $A$. Since $B$ is dense in $A$, then there exists a unique extension of 
		$\sum_{i=1}^n f_i \triangle m_i$ on $A$. Let $\iota : B \rightarrow A$ be the natural inclusion map. Then $\iota^*$ is injective by the density of $B$, and for $ h\in 
		A^*$, $\iota^*(h) = h|_{B}$. Let us prove that $\iota^*$ is surjective. Let $ f \in B^*$. Since $B^* = \langle B^*B^{**}\rangle$, then there are $ f_i \in B^*$ 
		and $ m_i \in B^{**}$ such that $ f = \sum_{i=1}^n f_i \triangle m_i$. Now, if we apply $\iota^*$ on the extension of $f$, we get $f$. From this, we deduce 
		that $\iota^*$ and $\iota^{**}$ are isomorphisms. Hence, there exist non-zero constants $D_1$ and $D_2$ such 
		that 
					\begin{eqnarray*}
								D_1~\|m\|_{B^{**}} \leq \|\iota^{**}(m)\|_{A^{**}} \leq D_2~ \|m\|_{B^{**}}
					\end{eqnarray*}
		for all $m\in B^{**}$. Since, $\iota^{**}(b) = \iota(b) = b$ for all $b \in B$, we get from the previous inequalities that the norm of $A$ and the norm of $B$ are 
		equivalent on $B$, which contradicts the fact that $B$ is a proper abstract Segal algebra.

\end{proof}

\begin{cor}\label{cor1}
			Let $A$ be a right, respectively left, faithful Banach algebra. If $A^* = \langle A^*\triangle A^{**}\rangle$, respectively $A^* = \langle A^{**} \square A^*\rangle$, then the norm of $A$ is equivalent to its 
			right, respectively left, multiplier algebra.
\end{cor}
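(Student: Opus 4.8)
The plan is to realise $A$ itself, with its own norm, as an abstract Segal algebra of the appropriate side inside its multiplier algebra, and then to read off the statement as the contrapositive of Theorem~\ref{theo_segal}. I carry out the left-handed case (the hypothesis $A^{*}=\langle A^{**}\square A^{*}\rangle$, and the left multiplier algebra); the right-handed case is entirely symmetric, the only change being that one works with $M_{r}(A)$ and with $A^{\mathrm{op}}$ in place of $M_{\ell}(A)$ and $A$, which has the effect of interchanging $\square$ and $\triangle$.

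So suppose $A$ is left faithful and $A^{*}=\langle A^{**}\square A^{*}\rangle$, and let $\theta\colon A\to M_{\ell}(A)$ be the canonical homomorphism $a\mapsto L_{a}$ into the left multiplier algebra. Left faithfulness makes $\theta$ injective, and clearly $\|\theta(a)\|_{M_{\ell}(A)}\le\|a\|_{A}$. Put $\mathcal A:=\overline{\theta(A)}$, the closure of $\theta(A)$ in $M_{\ell}(A)$; it is a closed subalgebra, hence a Banach algebra, and left faithfulness of $A$ makes $\mathcal A$ faithful. (The closure is genuinely needed: $\theta(A)$ need not already be closed in $M_{\ell}(A)$.) I would then verify that $\theta(A)$, carrying the norm transported from $(A,\|\cdot\|_{A})$, is a left abstract Segal algebra in $\mathcal A$. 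Density of $\theta(A)$ in $\mathcal A$ is built in; $\theta$ is an isometric isomorphism onto $(\theta(A),\|\cdot\|_{A})$, so the latter is a Banach algebra; and $\|\theta(a)\|_{\mathcal A}\le\|a\|_{A}$ is the required domination of the ambient norm. The remaining two points — that $\theta(A)$ is a left ideal of $\mathcal A$, and the Segal norm inequality — both follow at once from the single fact that every $m\in\mathcal A\subseteq M_{\ell}(A)$ maps $A$ boundedly into $A$: the product of $m$ and $\theta(a)$ in $\mathcal A$ equals $\theta(m(a))$, which lies in $\theta(A)$ and satisfies $\|m\,\theta(a)\|_{A}=\|m(a)\|_{A}\le\|m\|_{\mathcal A}\,\|a\|_{A}$, because $\|m\|_{\mathcal A}$ dominates the operator norm of $m$ on $A$. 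In particular all the Segal-algebra estimates hold with constant $1$.

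To finish, observe that the norm of $A$ is equivalent to the norm of its left multiplier algebra precisely when $\theta$ is bounded below, that is, precisely when $\theta(A)$ is \emph{not} a proper abstract Segal algebra in $\mathcal A$. If it were proper, Theorem~\ref{theo_segal} applied to the pair $\theta(A)\subseteq\mathcal A$ would force $A^{*}\neq\langle A^{**}\square A^{*}\rangle$, contradicting the hypothesis; hence the two norms are equivalent, which is the assertion. I expect there to be no analytic difficulty anywhere in this argument: the one thing that needs care is the bookkeeping that pairs the correct one-sided multiplier algebra with the correct Arens factorization ($\square$ with $M_{\ell}$, $\triangle$ with $M_{r}$) and keeps $\theta(A)$ a one-sided ideal of the closure $\mathcal A$; everything else reduces to recording that `proper abstract Segal algebra' means, by definition, `the two norms are not equivalent', so that Theorem~\ref{theo_segal} is being invoked purely in contrapositive form.
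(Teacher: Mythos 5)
Your argument is correct and is essentially the paper's own proof: the paper likewise observes that if the norm of $A$ is not equivalent to that of its one-sided multiplier algebra, then $A$ is a proper one-sided abstract Segal algebra in its closure inside that multiplier algebra, and then invokes Theorem~\ref{theo_segal} in contrapositive form. You merely make explicit the embedding $a\mapsto L_a$, the Segal-algebra axioms, and the side bookkeeping ($\triangle$ with the right multiplier algebra, $\square$ with the left) that the paper leaves implicit.
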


\begin{proof}
			We do a proof by contraposition. Suppose that the norm of $A$ is not norm equivalent to the norm of its right multiplier algebra $RM(A)$. Then $A$ is a 
			right abstract Segal algebra in its closure in $RM(A)$. By Theorem \ref{theo_segal}, $ A^* \neq \langle A^* \triangle A^{**}\rangle$.
\end{proof}

We mention here that the equivalence of the norm of a Banach algebra $A$ with its right or left multiplier algebra does not imply that $A$ has a bounded approximate identity (see \cite[Example 5]{W1}).\\ \indent
Using Theorem \ref{theo_segal}, we give a positive answer of problem 1 of \cite{HN1} if \\
 $\langle VN(G)^*\square VN(G)\rangle$ is already closed and not only norm dense. Note that the commutativity of $A(G)$ implies that $VN(G) \triangle VN(G)^* =  VN(G)^* \square VN(G)$. 

			\begin{theo}\label{theo_amenable}
						Let $G$ be a locally compact group. Then $G$ is amenable if and only if $VN(G) = \langle VN(G)^*\square VN(G)\rangle$.
			\end{theo}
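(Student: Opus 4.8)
The plan is to prove the two implications separately, using Leptin's theorem to translate amenability of $G$ into the existence of a bounded approximate identity for $A(G)$.

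For the easy direction, suppose $G$ is amenable. Then $A(G)$ has a bounded approximate identity, so by the Cohen–Hewitt factorization theorem (applied to the module actions of $A(G)$ on $VN(G)$) every element of $VN(G)$ factors as a product $u \cdot T$ with $u \in A(G)$ and $T \in VN(G)$. The point is to observe that the module action $u \cdot T$ of $A(G)$ on $VN(G) = A(G)^*$ can be written in the form $T \triangle f$ (or $f \square u$, using commutativity of $A(G)$), i.e. that $A(G) \subseteq B_\rho(G) \subseteq VN(G)^*$ in the appropriate sense, so that $A(G) \cdot VN(G) \subseteq \langle VN(G)^* \square VN(G)\rangle$. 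Since $VN(G) = A(G) \cdot VN(G)$ by Cohen factorization, we conclude $VN(G) = \langle VN(G)^* \square VN(G)\rangle$. I would be careful here to identify $A(G)$ inside $VN(G)^* = VN(G)_* $-module structure correctly, but this is essentially bookkeeping.

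For the hard direction — the new content — suppose $VN(G) = \langle VN(G)^* \square VN(G)\rangle$. I want to deduce that $A(G)$ has a bounded approximate identity. The strategy is to apply Corollary \ref{cor1} to the Banach algebra $A = A(G)$: since $A(G)$ is commutative and faithful, and since $VN(G) = A(G)^*$ together with the commutativity gives $A(G)^* = \langle A(G)^{**} \square A(G)^*\rangle$ (using $VN(G) \triangle VN(G)^* = VN(G)^* \square VN(G)$ as noted before the theorem, plus the identification of the relevant spans), Corollary \ref{cor1} yields that the norm of $A(G)$ is equivalent to the norm of its multiplier algebra $M(A(G), A(G))$. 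In other words $A(G)$ is closed in its multiplier algebra. By \cite[Theorem 1]{Lo3} (quoted in the introduction), this closedness is equivalent to amenability of $G$. Hence $G$ is amenable.

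The main obstacle I anticipate is the translation step between the factorization hypothesis stated for $VN(G)$ and the hypothesis $A^* = \langle A^{**} \square A^*\rangle$ required by Corollary \ref{cor1}: one must check that $\langle VN(G)^* \square VN(G)\rangle$, where $VN(G)$ sits inside $VN(G)^{**}$ canonically, really does coincide with $\langle A(G)^{**} \square A(G)^*\rangle$ for $A = A(G)$, and that the first Arens product $\square$ on $A(G)^{**}$ restricts correctly. This is where the commutativity of $A(G)$ and the identity $VN(G)\triangle VN(G)^* = VN(G)^*\square VN(G)$ do the work, but verifying it cleanly — including that no element outside the canonical image of $VN(G)$ in $VN(G)^{**}$ is needed — is the delicate point. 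Once that identification is in place, the result follows by chaining Corollary \ref{cor1} with Lau's characterization \cite[Theorem 1]{Lo3} and Leptin's theorem.
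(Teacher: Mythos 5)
Your converse direction (the new content) is exactly the paper's argument and is fine: with $A=A(G)$ the hypothesis $VN(G)=\langle VN(G)^{*}\square VN(G)\rangle$ is, by the very definition of $\square$ given in the preliminaries, the statement $A^{*}=\langle A^{**}\square A^{*}\rangle$, so the left version of Corollary \ref{cor1} gives the norm equivalence of $A(G)$ with its multiplier algebra and \cite[Theorem 1]{Lo3} yields amenability. The ``delicate identification'' you worry about is vacuous: $VN(G)^{*}\square VN(G)$ does not involve $VN(G)^{**}$ at all, it is the module action of $A(G)^{**}$ on $A(G)^{*}$ already fixed in the paper, and commutativity of $A(G)$ is only needed if one prefers to quote the $\triangle$-version of the corollary.

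The forward direction as you wrote it, however, contains a genuine error. Cohen--Hewitt factorization applied to the $A(G)$-module $VN(G)$ only factorizes the \emph{essential part} of the module: it gives $A(G)\cdot VN(G)=\overline{A(G)\cdot VN(G)}=UCB(\hat{G})$, not $A(G)\cdot VN(G)=VN(G)$. As recalled in the paper's introduction, $VN(G)=UCB(\hat{G})$ forces $G$ to be compact, so for any non-compact amenable group (say $G=\mathbb{R}$) your claimed factorization $VN(G)=A(G)\cdot VN(G)$ is false; this is precisely why the theorem is stated with $VN(G)^{*}=A(G)^{**}$ rather than with $A(G)$. The correct argument, which is what the paper's one-line statement ``$A(G)$ has a BAI and so $VN(G)=VN(G)^{*}\square VN(G)$'' rests on, uses an element of $A(G)^{**}$ outside the canonical image of $A(G)$: let $(e_{\alpha})$ be a bounded approximate identity of $A(G)$ (Leptin) and let $E\in A(G)^{**}=VN(G)^{*}$ be a weak$^{*}$ cluster point of it. Then for every $T\in VN(G)$ and $u\in A(G)$ one has $\langle E\square T,u\rangle=\langle E,T\square u\rangle=\lim_{\alpha}\langle T,ue_{\alpha}\rangle=\langle T,u\rangle$, so $E\square T=T$; hence $VN(G)=VN(G)^{*}\square VN(G)$ (no linear span even needed). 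With this repair, your proof coincides with the paper's.
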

			
			\begin{proof}
						If $G$ is amenable, then by Leptin's theorem, $A(G)$ has a bounded approximate identity and so $ VN(G) = VN(G)^*\square VN(G)$. If $VN(G) = \langle VN(G)^* \square VN(G)\rangle$, 
						then by Corollary \ref{cor1}, the norm of $A(G)$ and its multiplier algebra are equivalents. Thus, by \cite[Theorem 1]{Lo3}, $G$ is 
						amenable.	
			\end{proof}

	It is interested to compare this result on factorization of $VN(G)$ with the classical factorization property treated by A. Lau and A. \"{U}lger in \cite{LU1}, 
	i.e., $A^*=A^*A$ for a Banach algebra $A$ with a BAI. In the case of the Fourier algebra, for any locally compact  group $G$, the equality $VN(G) = 
	VN(G)\triangle A(G) = UCB(\hat{G})$ implies that $A(G)$ is unital. This is not the case in general for Banach algebra. Take for example, 
	$A=K(X)$, where $X$ is a non-reflexive Banach space such that $X^*$ has the bounded approximation property and the Radon-Nikodym property. By \cite[Corollary 4.2.12]{Po}, $A^*=A^*A$, but $K(X)$ is not unital. However, Lau-\"{U}lger asked a question after \cite[Theorem 2.6]{LU1} which was made as a conjecture by the author \cite[Conjecture 5.3.5]{Po}, we state it here:
			\begin{conj}
						There is no faithful infinite-dimensional non-unital weakly sequentially complete Banach algebra $A$ with a BAI such that $A^* = A^*A$. 
			\end{conj}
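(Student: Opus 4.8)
The plan is to argue by contradiction: suppose $A$ is faithful, infinite-dimensional, non-unital, weakly sequentially complete, carries a bounded approximate identity $(e_\alpha)$ of bound $M$, and satisfies $A^{*}=A^{*}A$. The first step is to locate the obstruction to unitality inside $A^{**}$. With the conventions of the paper, the algebra-level product $A^{*}A$ is contained in the bidual-level product $\langle A^{**}\square A^{*}\rangle$ (take the bidual factor to lie in $A\subseteq A^{**}$), so the hypothesis $A^{*}=A^{*}A$ upgrades to $A^{*}=\langle A^{**}\square A^{*}\rangle$ and the appropriate one-sided version of Corollary \ref{cor1} applies: the norm of $A$ is equivalent to its multiplier norm. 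Thus $A$ is a closed subalgebra of its multiplier algebra $M(A)$, and under the canonical embedding $M(A)\hookrightarrow A^{**}$ the unit $1\in M(A)$ is carried to a mixed identity $E$ for the Arens products. Non-unitality of $A$ then says exactly that $E\notin A$, and the entire problem reduces to proving $E\in A$.

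The second step is to realize $E$ as a weak$^{*}$ limit of the approximate identity and to reinterpret this as weak Cauchyness in $A$. For each $f\in A^{*}$ write $f=g\cdot a$ with $g\in A^{*}$ and $a\in A$; then $\langle e_\alpha,f\rangle=g(ae_\alpha)\to g(a)$, so the net $(e_\alpha)$ converges weak$^{*}$ to $E$ in $A^{**}$, which is precisely the assertion that $(e_\alpha)$ is weakly Cauchy in $A$. Weak sequential completeness now enters decisively, but only at the level of sequences: if $(x_n)\subseteq A$ is weakly Cauchy, equivalently weak$^{*}$ convergent in $A^{**}$, then by weak sequential completeness it converges weakly in $A$, and the weak and weak$^{*}$ limits coincide, so the limit lies in $A$. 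Hence if I can extract from the net a sequence $(e_{\alpha_n})$ still converging weak$^{*}$ to $E$, weak sequential completeness delivers a weak limit $e\in A$ with $e=E$, contradicting $E\notin A$ and completing the proof.

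The hard part is exactly this passage from the net to a sequence: weak$^{*}$ convergence of a bounded net need not furnish a weak$^{*}$ convergent subsequence, and the mixed identity need not lie in the weak$^{*}$ sequential closure of $\{e_\alpha\}$. This is precisely why the statement is conjectural rather than an immediate consequence of Corollary \ref{cor1}, and it is also where the hypothesis of weak sequential completeness is indispensable: for $A=K(X)$ with $X$ non-reflexive one has $A^{*}=A^{*}A$ together with a bounded approximate identity while $A$ is non-unital (see \cite{W1} and \cite[Corollary 4.2.12]{Po}), and the breakdown of the above argument is reflected in $K(X)$ containing a copy of $c_{0}$, so that it fails to be weakly sequentially complete. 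To attack the obstacle I would exploit the algebraic rigidity of the approximate identity together with the $c_{0}$-avoidance forced by weak sequential completeness: I would try to manufacture from the net a weakly unconditionally Cauchy series of differences $e_{\alpha_{n+1}}-e_{\alpha_n}$, or an analogous sequence clustering at $E$, and then invoke a Rosenthal-type dichotomy on $A^{*}$ to conclude that the only alternative to extracting such a weakly Cauchy sequence is an embedded $c_{0}$ in $A$, which is excluded. A clean sufficient hypothesis under which the reduction succeeds verbatim—thereby yielding a theorem rather than a conjecture—is that $A$ possess a \emph{sequential} bounded approximate identity, in which case the second step applies directly and the question raised after \cite[Theorem 2.6]{LU1} is answered affirmatively.
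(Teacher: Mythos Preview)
The statement you are attempting is recorded in the paper as a \emph{conjecture}, not a theorem: no proof is offered, only the remark that it holds in several special classes (strongly Arens irregular algebras, Arens regular algebras, algebras that are ideals in their biduals). There is therefore no proof in the paper to compare against; a complete argument would settle an open problem.

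Your proposal is candid about being incomplete. The reduction you outline---$(e_\alpha)$ converges weak$^*$ to the mixed identity $E$ because every $f\in A^*$ factors as $g\cdot a$, so it remains only to show $E\in A$---is correct, and you rightly isolate the obstruction: weak sequential completeness acts on sequences, while $(e_\alpha)$ is only a net. What follows, however, is a programme rather than a proof: the idea of manufacturing a weakly unconditionally Cauchy series of differences and invoking a Rosenthal-type dichotomy to exclude $c_0$ is not executed, and no mechanism is given by which weak sequential completeness alone produces the required sequence. Two side remarks. First, your detour through Corollary~\ref{cor1} is superfluous---the norm equivalence with the multiplier algebra is never used once you observe directly from $A^*=A^*A$ that $(e_\alpha)$ is weak$^*$-convergent---and the side is off: $A^*A=A^*\square A$ sits inside $\langle A^*\triangle A^{**}\rangle$ (via $f\square a=f\triangle\hat a$), not inside $\langle A^{**}\square A^*\rangle$ as you write. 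Second, your closing observation that a \emph{sequential} BAI makes the argument go through verbatim is correct; the gap between ``sequential BAI'' and ``BAI'' is precisely what keeps the statement a conjecture.
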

	This conjecture is true for many Banach algebras like strongly Arens irregular Banach algebras, Arens regular Banach algebras, Banach algebras which are ideals 
	in their biduals. To get a more complete list with proof of each case, see \cite[Theorem 5.3.6]{Po}. Theorem 
	\ref{theo_amenable} would suggest that for weakly sequentially complete Banach algebra, the factorization $A^*=\langle A^* \triangle A^{**}\rangle$ captures the 
	existence of a BAI. We can reformulate this by saying that the factorization $A^*=\langle A^* \triangle A^{**}\rangle$ can characterize the co-amenability of a locally compact quantum group $\mathcal{G}$.


\begin{thebibliography}{99}

\bibitem[Bu]{Bu2}
	J. T. Burnham,  
	{\em Closed ideals in subalgebras of Banach algebras. II. Ditkin's condition}. Monatsh. Math., 73 (1974), 1-3

\bibitem[Bu2]{Bu3}
J. T. Burnham, 
{ \em Segal algebras and dense ideals in Banach algebras}. Lecture Notes in Math., 399 (1974), 33-58





\bibitem[D]{Da}
H. G. Dales, 
{\em Banach algebras and automatic continuity}. London Mathematical Society Monographs. New Series, 24. Oxford Science Publications. The Clarendon Press, Oxford University Press, New York, 2000. xviii+907 pp.


\bibitem[E]{Ey1}
 P. Eymard,  
 {\em L'alg\`ebra de Fourier d'un groupe localement compact}, Bull. Soc. Math. France 92 (1964), 181-236. 


\bibitem[He]{He1}
C. Herz,   
{\em Harmonic synthesis for subgroups}. Ann. Inst. Fourier (Grenoble) 23 (1973), 91-123.


\bibitem[Hu]{H1}
Z. Hu, 
{\em Open subgroups and the centre problem for the Fourier algebra}. Proc. Amer. Math. Soc. 134 (2006), no. 10, 3085–3095


\bibitem[HN]{HN1}
Z. Hu ; M. Neufang, 
{\em Distinguishing properties of Arens irregularity}., Proc. Amer. Math. Soc. 137 (2009), no. 5, 1753–1761

 
\bibitem[L]{L1}
A. T. Lau, 
{\em The second conjugate algebra of the Fourier algebra of a locally compact group}. Trans. Amer. Math. Soc. 267 (1981), no. 1, 53–63.


\bibitem[LL3]{LL3}
 A. T. Lau ; V. Losert,  
 {\em The $C^*$-algebra generated by operators with compact support on a locally compact group}. J. Funct. Anal. 112 (1993), no. 1, 1–30.


\bibitem[LU]{LU1}
 A. T. Lau ;A. \"{U}lger, 
 {\em Topological centers of certain dual algebras}. Trans. Amer. Math. Soc. 348 (1996), no. 3, 1191–1212. 

\bibitem[Le]{Le2}
M. Leinert, 
	{\em A contribution to Segal algebras}., Manuscripta Math. 10 (1973), 297-306

\bibitem[Le2]{Le3}
M. Leinert, 
	{\em Remarks on Segal algebras}., Manuscripta Math. 16 (1975), no. 1, 1-9
	

\bibitem[Lep]{Lep2}
	H. Leptin, 
	{\em Sur l'algebre de Fourier d'un groupe localement compact}. C.R. Acad. Sci. Paris Ser. A-B 266 (1968), A1180-A1182.

\bibitem[Lo]{Lo1}
V. Losert, , 
{\em Some properties of groups without the property $P_1$}, Comment. Math. Helv. 54 (1979), 133-139.



\bibitem[Lo]{Lo2}
V. Losert, 
{\em The centre of the bidual of Fourier algebras (discrete groups)}. preprint.

\bibitem[Lo2]{Lo3}
V. Losert, 
{\em Properties of the Fourier algebra that are equivalent to amenability}. Proc. Amer. Math. Soc. 92 (1984), no. 3, 347–354.


\bibitem[NP1]{NPo1}
M. Neufang ; D. Poulin, 
{\em Characterization of the weak amenability of locally compact groups}. preprint
 

\bibitem[NP2]{NPo2}
M. Neufang ; D. Poulin, 
{\em Strong topological center}. preprint

\bibitem[NP3]{NPo3}
M. Neufang ; D. Poulin, 
{\em Arens regularity of the Fourier algebra}. preprint


\bibitem[P]{Po}
D. Poulin, {\em The strong topological centre and dual factorization properties}, PhD thesis, Carleton University, in preparation.

\bibitem[W]{W1}
 G. Willis, {\em Examples of factorization without bounded approximate units}. Proc. London Math. Soc. (3) 64 (1992), no. 3, 602–624.

\bibitem[Y]{Y1}
N. J. Young, 
{\em The irregularity of multiplication in group algebras}. Quart J. Math. Oxford Ser. (2) 24 (1973), 59–62.				


\end{thebibliography}
\end{document}